\documentclass[12pt]{amsart}
\usepackage[margin=1in]{geometry}
\usepackage{amsmath,amsfonts,amsthm,amssymb,bbm}
\usepackage{graphicx,color, dsfont}
\usepackage{fourier}

\newtheorem{theorem}{Theorem}

\newtheorem{lemma}{Lemma}
\newtheorem{proposition}{Proposition}

\newtheorem{corollary}{Corollary}

\newtheorem{claim}{Claim}

 \theoremstyle{definition}
 
 \theoremstyle{remark}

 \numberwithin{equation}{section}

\newcommand{\vertiii}[1]{{\left\vert\kern-0.25ex\left\vert\kern-0.25ex\left\vert #1
    \right\vert\kern-0.25ex\right\vert\kern-0.25ex\right\vert}}

\newcommand{\f}[2]{\frac{#1}{#2}}

\newcommand{\al}{\alpha}
\newcommand{\be}{\beta}

\newcommand{\ga}{\gamma}

\newcommand{\la}{\lambda}

\newcommand{\si}{\sigma}

\newcommand{\rone}{\mathbf R}

\newcommand{\dpr}[2]{\langle #1,#2 \rangle}

\newcommand{\eps}{\epsilon}

\newcommand{\p}{\partial}

\newcommand{\beq}{\begin{equation}}
\newcommand{\eeq}{\end{equation}}
\newcommand{\beqna}{\begin{eqnarray*}}
\newcommand{\eeqna}{\end{eqnarray*}}
\newcommand{\beqn}{\begin{equation*}}
\newcommand{\eeqn}{\end{equation*}}
\newcommand{\bp}{\begin{proof}}
\newcommand{\ep}{\end{proof}}
\newcommand{\bprop}{\begin{proposition}}
\newcommand{\eprop}{\end{proposition}}

\newcommand{\et}{\end{theorem}}
\newcommand{\bex}{\begin{Example}}
\newcommand{\eex}{\end{Example}}
\newcommand{\bc}{\begin{corollary}}
\newcommand{\ec}{\end{corollary}}
\newcommand{\bcl}{\begin{claim}}
\newcommand{\ecl}{\end{claim}}
\newcommand{\bl}{\begin{lemma}}
\newcommand{\el}{\end{lemma}}

\begin{document}

\title[Asymptotic stability for   traveling kinks of the   Boussinesq equation]
 {Asymptotic stability for monotone traveling kinks of the  dissipative  Boussinesq problem}
%----------Author 1

%\thanks{Stefanov is partially  supported by  NSF-DMS   under award \#  2204788}

\author[Atanas Stefanov]{\sc Atanas Stefanov}
\address{ }
\email{stefanov@uab.edu}

\subjclass[2000]{Primary  35Q35  Secondary 35B35, 35B40}

\keywords{traveling kinks}

\date{\today}

\begin{abstract}
  We study the dissipative Boussinesq problem, which extends the "good" Boussinesq equation by incorporating viscosity effects. It is well-known that this model supports monotone decreasing traveling kink solutions. We show that these kinks are orbitally stable in $H^1(\rone)$. Moreover, they are  asymptotically stable in $\dot{H}^1(\rone)$ and $L^p(\rone), 2<p\leq \infty$. 
\end{abstract}

\maketitle

\section{Introduction}
  We consider the Cauchy problem damped Boussinesq equation in one spatial dimension
  \begin{equation}
  	\label{10} 
  	\begin{cases}
  		u_{tt}- c u_{txx}+b u_{xxxx} - u_{xx}= \p_{xx} (f(u)), x\in \rone, t\geq 0 \\
  		u(0,x)=u_0(x);  u_t(0,x)=u_1(x).
  	\end{cases}
  	 \end{equation}
  where $b>0, c>0$. This belongs to a family of fundamental models for the propagation of long waves with small amplitude on the surface of shallow water, incorporating the viscosity effects of the medium. It is worth pointing out that the case $c=0$ reduces \eqref{10} to the ``good'' Boussinesq model, which was studied extensively in th last thirty years.  See \cite{TM} for the analysis of the Cauchy problem. However,  since the  early years of the water waves modeling, Whitham, \cite{Wh}  has wondered about  the precise form of the viscous correction effect. Hence, models like \eqref{10}, which feature a viscosity term analogous to the BBM equation, become meaningful objects of study. We refer to \cite{Liu1, Liu2, MPP, OC} for  results on local and global existence, long-time asymptotic behavior of solutions for the initial and  boundary value problems of Boussinesq equation.
  
  We now introduce some appropriate technical conditions on the non-linearity $f$ later on, but basically 
  we will be mostly concerned  with  the more concrete (and physical) case of integer power non-linearities\footnote{Although the nonlinearity assumption can be considerably more general} $f(u)=u^p, p=2, 3, \ldots$.  
We henceforth consider the physically relevant quadratic case 
    \begin{equation}
  	\label{11} 
  		u_{tt} - c u_{txx}+ b u_{xxxx} - u_{xx}=a \p_{xx} (u^2),  x\in \rone, t\geq 0.
  \end{equation}
Due to the availability of the transformation $u\to -u$, we may and do  assume that $a>0$. Using the transformation $u\to \f{1}{2a} u\left(\f{t}{c}, \f{x}{c}\right)$ puts the problem in the following convenient equivalent form 
 \begin{equation}
	\label{12} 
	u_{tt} - u_{txx}+ B u_{xxxx} - u_{xx}= \f{1}{2} \p_{xx} (u^2).
\end{equation}
where we have  denoted $B:=\f{b}{c^2}$. 
Our main goal is to understand the dynamics close to traveling kinks, that is - heteroclinic solutions of \eqref{12}. Note that the transformation $x\to -x$, which leaves \eqref{12} invariant allows us to assume that $u(-\infty)=\phi_->\phi_+=u(+\infty)$. 
\subsection{Traveling kinks for the viscous Boussinesq equation}

Assuming traveling wave ansatz with speed $\si$, namely 
$u(t,x)=\phi(x- \si t), \lim_{x\to \pm \infty}\phi(x)= \phi_\pm$, we obtain the ODE, 
\begin{equation}
	\label{14} 
	B \phi''''+\si  \phi'''+(\si^2-1) \phi''= \f{1}{2}  (\phi^2)'' 
\end{equation}
We focus on heteroclinic solutions   $\phi$, i.e.  kink solutions\footnote{Here, and thereafter, we do not distinguish between  fronts and backs, depending on $\phi_+>\phi_-$ or $\phi_+<\phi_-$. That is, we say that $\phi$ is a kink solution, if  $\lim_{x\to \pm \infty}\phi(x)=\phi_\pm, \phi_+\neq \phi_-$} with 
$$
\lim_{|x|\to \infty} |\phi'(x)|+|\phi'''(x)|=0, \lim_{x\to \pm \infty}\phi(x)=\phi_\pm
$$
 where generically $\phi_+\neq \phi_-$. 
 Integrating \eqref{14}  twice and introducing an integration constant k yields 
 \begin{equation}
 	\label{141} 
 	B \phi''+\si  \phi'+(\si^2-1) \phi= \f{1}{2}(\phi^2 + k).
 \end{equation}
 Taking limits at $\pm \infty$ shows  that $k$ must satisfy 
 \begin{equation}
 	\label{15} 
 	\f{k}{2} =(\si^2-1) \phi_+ - \f{1}{2} \phi_{+}^2=(\si^2-1) \phi_- -\f{1}{2} \phi_-^2
 \end{equation}
 Excluding $k$ from \eqref{15}, we arrive at the Rankine-Hugoniot relation between speed and limit values for the kink 
 \begin{equation}
 	\label{161} 
 	\si^2-1=\f{1}{2} \frac{\phi_+^2- \phi_-^2}{\phi_+-\phi_-}=\f{1}{2} (\phi_+ + \phi_-)
 \end{equation}
 This shows that a necessary condition relating the heteroclinic values $\phi_\pm$ for the problem \eqref{12},  is given by 
 \begin{equation}
 	\label{16} 
 	\phi_- + \phi_+\geq -2.
 \end{equation}
We are now ready to state our existence result. 
 \begin{proposition}[Existence for traveling kinks of the viscous Boussinesq]
 \label{prop:10} 
 Let $B>0$, $\phi_->\phi_+$ are three parameters, which satisfy \eqref{16}. Let $\si$ is given by \eqref{161}, 
 $$
 \si=\pm \sqrt{1+\f{1}{2} (\phi_+ + \phi_-)}.
 $$
  Then, the equation \eqref{14} has an  unique smooth solution $\phi$ with the property $\lim_{x\to \pm \infty} \phi(x) = \phi_\pm$, $\lim_{|x|\to \infty} |\phi'(x)|+|\phi''(x)| =0$. 
 Moreover, the fronts $\phi$ are monotonically decreasing \underline{if and only if} 
 \begin{equation}
 	\label{292} 
 	\nu:=\f{Bm}{\si^2}\leq \f{1}{4}, \ \ m:=\f{\phi_- - \phi_+}{2}
 \end{equation}
 Finally, the fronts obeys the following asymptotics at $\pm \infty$. For $\nu\in (0, 1/4)$, introduce the exponents $\mu_-(\nu)>0, \mu_+(\nu)<0$, 
	$$
	\mu_{\mp}(\nu)=\f{-1+\sqrt{1\pm 4\nu}}{2\nu}.
	$$
	Then, for some $c_\pm>0$, 
\begin{eqnarray}
	\label{f:1} 
	& & \begin{cases}
		\phi(x)=\phi_\pm - c_\pm exp(\f{m \mu_{\pm}(\nu)}{\si} x)+o(e^{\f{m \mu_{\pm}(\nu)}{\si} x}), \ \ x\sim \pm \infty \\
		\phi'(x)=- c_\pm \f{m \mu_{\pm}(\nu)}{\si} exp(\f{m \mu_{\pm}(\nu)}{\si} x)+o(e^{\f{m \mu_{\pm}(\nu)}{\si} x}), \ \ x\sim \pm \infty \\
		\phi''(x)=- c_\pm \left(\f{m \mu_{\pm}(\nu)}{\si}\right)^2 exp(\f{m \mu_{\pm}(\nu)}{\si} x)+o(e^{\f{m \mu_{\pm}(\nu)}{\si} x}), \ \ x\sim \pm \infty 
		\end{cases}. 
\end{eqnarray}

\end{proposition}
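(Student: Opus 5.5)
The plan is to reduce \eqref{141} to a normalized Fisher--KPP type profile equation and then run a phase-plane analysis. First I complete the square. Set $s:=\si^2-1=\f{1}{2}(\phi_++\phi_-)$ by \eqref{161}, so that $\phi_\pm=s\mp m$. Using \eqref{15}, $\f{k}{2}=s\phi_+-\f12\phi_+^2$, and a direct computation gives
$$
(\si^2-1)\phi-\f12(\phi^2+k)=\f12\left(m^2-(\phi-s)^2\right).
$$
Hence \eqref{141} becomes $B\phi''+\si\phi'+\f12(m^2-(\phi-s)^2)=0$. Note that $\si\neq 0$ is needed, so the degenerate case $\phi_++\phi_-=-2$ must be excluded or treated separately. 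Since $x\to-x$ maps $\si\to-\si$, it suffices to take $\si>0$. I then rescale by $\phi(x)=s+m\,v(y)$ with $y=\f{m}{\si}x$. This yields
$$
\nu v''+v'+\f12(1-v^2)=0,\qquad v(-\infty)=1,\quad v(+\infty)=-1,
$$
where $\nu=Bm/\si^2$ is exactly the quantity in \eqref{292}. Every claim in Proposition \ref{prop:10} is now a statement about this single one-parameter ODE.

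Next I analyze the planar system $v'=w$, $\nu w'=-w-\f12(1-v^2)$. The linearization at $v=1$ is $\nu z''+z'-z=0$, with roots $\f{-1\pm\sqrt{1+4\nu}}{2\nu}$. These have opposite signs, so $(1,0)$ is a saddle with a one-dimensional unstable manifold whose rate is $\mu_-(\nu)>0$. The linearization at $v=-1$ is $\nu z''+z'+z=0$, with roots $\f{-1\pm\sqrt{1-4\nu}}{2\nu}$. Both have negative real part, so $(-1,0)$ is asymptotically stable. It is a node when $\nu\le 1/4$ and a focus when $\nu>1/4$.

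For existence, I follow the branch of the unstable manifold of $(1,0)$ that enters $\{v<1,\,w<0\}$. The energy $E=\f{\nu}{2}w^2+\f12(v-\f{v^3}{3})$ satisfies $E'=-w^2\le0$. The sublevel set cut out by $v\ge -2$ is bounded, and $v=-2$ is the other point with the same potential value as $v=1$, so this branch remains trapped there. By LaSalle it converges to $(-1,0)$. This gives a heteroclinic orbit for every $\nu>0$. Uniqueness up to translation holds because the unstable manifold is one-dimensional, and the other branch, which goes into $v>1$, escapes to infinity. Smoothness and the decay $\phi',\phi''\to0$ follow from the convergence in phase space.

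For monotonicity, the case $\nu>1/4$ is handled by the focus: the orbit spirals, so $w$ changes sign infinitely often and $\phi$ is not monotone. The case $\nu\le 1/4$ is the step I expect to be the main obstacle. There I will construct a trapping triangle $\{-1\le v\le 1,\ \la(v+1)\le -w\le\dots\}$ whose slopes are tied to the real eigenvalues, as in the classical KPP argument, and check that the vector field points inward on each side. This shows the orbit stays in $\{w<0\}$, so $v$ and hence $\phi$ is strictly decreasing. The borderline $\nu=1/4$, where the eigenvalue is double, needs extra care in choosing the slope.

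Finally, the asymptotics \eqref{f:1} come from standard stable/unstable manifold theory. Near $-\infty$ the solution approaches the saddle along the eigenvector for $\mu_-(\nu)$, so $1-v\sim c\,e^{\mu_-y}$. Near $+\infty$ it approaches the node along the slow eigendirection, with rate $\mu_+(\nu)$; genericity, i.e. that the coefficient is nonzero, follows from monotonicity together with the fact that the fast direction is one-dimensional. Undoing the scaling $y=mx/\si$ and $\phi=s+mv$ gives the exponents $m\mu_\pm/\si$. The formulas for $\phi'$ and $\phi''$ follow by differentiating the invariant-manifold expansions.
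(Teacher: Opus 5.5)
Your reduction to $\nu v''+v'+\frac12(1-v^2)=0$ is the same as the paper's. From there the paper cites Bona--Schonbek and Bona--Rajopadhye--Schonbek for existence, uniqueness and the criterion $\nu\le\frac14$, and reads the asymptotics off the linearizations. Your Lyapunov/LaSalle existence argument, unstable-manifold uniqueness, focus argument for $\nu>\frac14$ and KPP triangle for $\nu\le\frac14$ make this self-contained, which is a real gain.

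However, one step is wrong. The symmetry $x\mapsto -x$ does send $\sigma\mapsto-\sigma$, but it also swaps $\phi_-$ and $\phi_+$. It therefore maps decreasing kinks to increasing ones and cannot reduce the case $\sigma<0$, $\phi_->\phi_+$ to $\sigma>0$. In fact that case has no solution. For $\sigma<0$ your $v$-equation is unchanged, but $y=mx/\sigma$ reverses orientation. You would then need an orbit from $(-1,0)$ at $y=-\infty$ to $(1,0)$ at $y=+\infty$. This is impossible, since $E'=-w^2\le 0$ while $E(-1,0)=-\frac13<\frac13=E(1,0)$; equivalently, $(-1,0)$ has no unstable direction. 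So the $\pm$ in the statement is incorrect. Only $\sigma=+\sqrt{1+\frac12(\phi_++\phi_-)}$, with strict inequality in \eqref{16}, gives a decreasing kink. This is what the paper tacitly uses when it asks for $q(-\infty)=1$, $q(+\infty)=-1$. Your write-up should prove existence for $\sigma>0$ and record nonexistence for $\sigma\le 0$. Your remark on $\sigma=0$ is right: there the profile equation is conservative, and the potential differs at $\Phi=\pm m$ by $\frac{2m^3}{3}$.

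Several smaller points need fixing.

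\emph{Trapping set for existence.} The set $\{E<\frac13,\ v\ge -2\}$ is not bounded; it contains $\{v>1,\ w=0\}$. You need $v=1$ as a second barrier, which it is, because $E\ge\frac13$ on $\{v=1\}$.

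\emph{The triangle.} It must be $\{-1\le v\le 1,\ 0\le -w\le\lambda(v+1)\}$; you wrote the slanted inequality the other way. On the slanted side, inwardness reduces to $\nu\lambda^2-\lambda+1\le\frac{v+1}{2}$. The choice $\lambda=\frac{1}{2\nu}$ works for every $\nu\le\frac14$, including $\nu=\frac14$, so the borderline needs no special care.

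\emph{Slow-mode coefficient.} Monotonicity plus one-dimensionality of the fast direction does \emph{not} show that the orbit enters $(-1,0)$ along the slow eigendirection. The branch of the strong stable manifold coming from $\{v>-1,\ w<0\}$ is also monotone near the endpoint. What excludes it is the triangle. For $\nu<\frac14$ the orbit satisfies $-w\le\frac{1}{2\nu}(v+1)$, whereas on the strong stable manifold $-w/(v+1)\to\frac{1+\sqrt{1-4\nu}}{2\nu}>\frac{1}{2\nu}$.

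\emph{Sign at $+\infty$.} Since $\phi$ decreases to $\phi_+$, we have $\phi>\phi_+$. So in the form \eqref{f:1} the constant at $+\infty$ is negative, and the asserted $c_+>0$ is a sign slip you should correct rather than reproduce.
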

 We postpone the proof of Proposition \ref{prop:10} for the appendix, as it involves some technical details and transformations. 
 
 Next, we recast \eqref{12} in an equivalent form, which will be more useful in the sequel. 
 \subsection{Equivalent formulation of the viscous Boussinesq problem}  
We introduce a new unknown function $v$, 
$$
u_t-\al u_{xx}=:v_x,
$$
where $\al>0$ is to be determined shortly. Then, using the equation \eqref{12}, we obtain 
\begin{eqnarray*}
	\p_x (v_t - \be v_{xx}) &=& (\p_t-\be \p_{xx})(v_x) = (\p_t-\be \p_{xx})(u_t - \al u_{xx})=u_{tt}-(\al+\be) u_{txx} + \al \be u_{xxxx}=\\
	&=& (\al \be-B)u_{xxxx}+ u_{xx}+ \f{1}{2} \p_x^2(u^2).
\end{eqnarray*}
provided $\al, \be: c=\al+\be$, and we make the additional requirement that 
$\ga:=B-\al\be>0$. Note that such a choice of $\al, \be>0$  is always possible, as we need to solve 
$$
\begin{cases}
	\al+\be=1 \\
	\al \be = s<B
\end{cases}
$$
which has the pair of positive solutions 
$$
\al, \be=\f{1\pm \sqrt{1-4s}}{2},
$$
provided $s$ is selected, subject to  $0<s<\min(\f{1}{8}, \f{B}{2})$. The most suitable selection is to choose a small enough $0<\al<<1, \be=1-\al$, to ensure that the inequality $B>\al \be=\al(1-\al)$ holds. This is always possible, and any such choice is essentially equivalent; we therefore fix one such choice. 

With these choices, and after integrating the $w$ equation\footnote{Here, we need to ensure that $\lim_{|x|\to +\infty} (|u'(x)|+ |u'''(x)|)=0$}, we end up with the equivalent system 
\begin{equation}
	\label{28} 
	\begin{cases}
		u_t - \al u_{xx}= v_x \\
		v_t-\be  v_{xx}  = -\ga  u_{xxx}  + u_x + \f{1}{2} \p_x (u^2). 
	\end{cases}
\end{equation} 
Clearly, $v$ is determined up to a constant in \eqref{28}, as it always appears under a derivative. In fact, plugging in the special solution $u=\phi(x- \si t)$, we obtain, with $\psi'=-\si \phi'-\al \phi''$, or 
\begin{equation}
	\label{26}
	\psi=-\si  \phi-\al\phi', 
\end{equation}
where we have naturally enforced the requirement 
\begin{equation}
	\label{38} 
	\lim_{x\to \pm \infty} \psi(x)=-\si \phi_\pm.
\end{equation}

To recapitulate, suppose a 
 traveling kink $\phi$ (i.e. a solution of \eqref{14}) is given to us. 
So, in order to consider a well-defined problem for \eqref{28}, consistent with the relations just found, we need to analyze 
\begin{equation}
	\label{102} 
	\begin{cases}
		u_{tt}-  u_{txx}+ B u_{xxxx} - u_{xx}=\f{1}{2}  (u^2)_{xx}, x\in \rone, t\geq 0 \\
		u(0,x)=u_0(x);  u_t(0,x)=u_1(x)=:\p_x f_1;\\
		\lim\limits_{x\to \pm \infty} u_0(x)=\phi_\pm, \lim\limits_{x\to \pm \infty} f_1(x)=-\si \phi_\pm\\
		u_0-\phi, u_0' \in L^2(\rone), f_1 + \si  \phi \in L^2(\rone). 
	\end{cases}
\end{equation}
In such a case, we can formulate the corresponding problem for \eqref{28}, which is first order in time. Specifically,  
\begin{equation}
	\label{29} 
	\begin{cases}
		u_t - \al u_{xx}= v_x \\
		v_t-\be  v_{xx}  = -\ga  u_{xxx}  + u_x +\f{1}{2} (u^2)_x, \\
		u(0,x)=u_0(x); v(0,x)= f_1(x)-\al u_0'(x), \\
		u_0-\phi, u_0'\in L^2(\rone), f_1 + \si  \phi \in L^2(\rone).
	\end{cases}
\end{equation} 
We would like to explore the behavior of the model \eqref{12} close to the special solution $(\phi(x - \si t), \psi(x- \si t)$ of \eqref{29}.  Indeed, note that such a pair is a solution of \eqref{29}, with the boundary conditions $u_0=\phi, f_1=-\si  \phi$. 

Specifically, let
$$
\begin{cases}
	u(t,x)= \phi(x-\si t+\mu(t))+U(t, x-\si t +\nu(t)), \\
	v(t,x)= \psi(x-\si t+\mu(t)) +V(t, x-\si t + \nu(t))
\end{cases}
$$
where $\mu(t), \nu(t)$ are  $C^1$ functions, to be determined momentarily. We obtain the following nonlinear evolution problem, with localized Cauchy data\footnote{Note that in \eqref{105},  whenever one sees $\phi$, it is evaluated at the shifted argument $y+\mu(t)$, while $U,V$ are evaluated at the argument $y+\nu(t)$}
\begin{equation}
	\label{105} 
	\begin{cases}
		U_t +(\nu'(t)-\si)  U'+\mu' \phi' - \al U''= V' \\
		V_t+(\nu'(t)-\si) V' +\mu'\psi'-\be  V''  = -\ga  U''' + U' + \f{1}{2} (2\phi U+U^2)', \\
		U(0,y)=U_0(y) \in L^2(\rone) ; V(0,y)= V_0(y) \in L^2(\rone). 
	\end{cases}
	\end{equation} 
 Regarding our aim, we seek to show that upon a selection of appropriate $\mu=\mu(t)$ and  given small enough data $U_0, V_0$, the system \eqref{105} has global solutions $U, V$, so that they remain small for all forward times $t>0$ and in fact, converge to zero, in appropriate norms as $t\to +\infty$. 
\subsection{Main results}
Our main result establishes the asymptotic stability of the kinks $\phi(x-\si t)$,  stated precisely as follows. 
\begin{theorem}[Asymptotic stability of monotone fronts]
	\label{theo:10} 
	
	Let $B>0$, $\phi_- > \phi_+$ are so that \eqref{16} is satisfied. Let $\phi$ is the unique kink solution guaranteed by Proposition \ref{prop:10}. Assume in addition, 
	\begin{itemize}
		\item $\phi_->\phi_+>-1$
		\item The kink $\phi$ is monotonically decreasing, i.e. \eqref{292} holds true. 
	\end{itemize}
Then, there exists $\eps_0>0$, so that whenever $u_0, u_1=f_1'$ are functions satisfying 
\begin{equation}
	\label{d:10} 
	\|u_0-\phi\|_{H^1}+\|f_1+\si \phi\|_{L^2}<\eps_0,
\end{equation}
then, the initial value problem \eqref{102} has unique global smooth solution $u$, with the following properties: 
\begin{itemize}
	\item Orbital stability in $H^1$ norm, 
	$$
\sup_{t>0} 	\inf_{\mu\in \rone} \|u(t,x) - \phi(t, x-\si t +\mu)\|_{H^1}\leq 2 (\|u_0-\phi\|_{H^1}+\|f_1+\si \phi\|).
	$$
	\item Asymptotic stability in $\dot{H}^1$ norm, 
	$$
	\lim_{t\to +\infty}	\inf_{\mu\in \rone} \|u_x(t,x) - \phi'(t, x-\si t +\mu))\|_{L^2}=0.
	$$
	In particular, asymptotic stability holds in any $L^p, 2<p\leq \infty$ norm, 
	$$
	\lim_{t\to +\infty}	\inf_{\mu\in \rone} \|u(t,x) - \phi(t, x-\si t +\mu))\|_{L^p}=0.
	$$
\end{itemize}
\end{theorem}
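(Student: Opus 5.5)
We propose to work with the first-order system \eqref{105} in the moving frame $y=x-\si t$, taking the simplest modulation $\mu\equiv\nu\equiv 0$ (equivalently, absorbing the shift through an integrated variable). The structural fact to exploit is that the kink is monotone. Hence $\phi'<0$ is a positive multiple of a ground state of the linearized operator, and the linearization should have a good sign apart from the translation mode. Because $U=u-\phi$ is not localized in the "mass" direction (a shift changes $\int U$), we would pass to antiderivative variables, as in the classical Matsumura--Nishihara treatment of viscous shocks. Concretely, we set $U=W'$ and $V=Z'$ with $W,Z\in L^2$. Integrating \eqref{105} in $y$ gives
\begin{equation*}
W_t-\si W'-\al W''=Z', \qquad Z_t-\si Z'-\be Z''=-\ga W'''+W'+\phi W'+\tfrac12 (W')^2 .
\end{equation*}
The initial data $W_0=\int_{-\infty}^y (u_0-\phi)$ is in $L^2$ only if $\int(u_0-\phi)=0$, so the shift $\mu$ must be fixed at time zero to arrange this conservation law. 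Here one uses that $\int (u-\phi(\cdot+\mu))$ is conserved, since $u_t=\p_x(\cdot)$, and that $\frac{d}{d\mu}\int(\phi-\phi(\cdot+\mu))=\phi_--\phi_+\neq 0$. This choice of $\mu$ is why the orbital bound carries the infimum over $\mu$. We note a caveat: the hypotheses only give $u_0-\phi\in H^1$, not in $L^1$, so we would either work with a weaker primitive or approximate. We would not insist on $\int$ conservation beyond what the data allow.

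The core step is an energy estimate for $(W,Z)$ and their derivatives. We would build a functional of the form
\begin{equation*}
E=\int \big( (1+\phi)\,W'^2\cdot\theta+\ga W''^2 + Z'^2\big)\,dy + \text{(lower order terms in } W,Z),
\end{equation*}
with a weight chosen from $\phi$. Cross terms $\int Z'\,W'''$ cancel by the choice of $\ga$, and the linear terms $\int \phi W' Z''$ are integrated by parts, producing $\int\phi' W'Z'$. The assumption $\phi_+>-1$ gives $1+\phi>0$ everywhere, since $\phi$ decreases from $\phi_-$ to $\phi_+$. This makes the "potential" part of the energy coercive, in the same way that $\si^2-1>0$ on both sides for a Lax shock. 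The sign $\phi'<0$ then produces a dissipative term $-\si\int\phi'(\cdot)^2\ge 0$, or at worst a term controllable by the diffusion $\al\int W''^2+\be\int Z''^2$. The goal is
\begin{equation*}
\frac{d}{dt}E+c\int\big(|\phi'|W'^2+\al W''^2+\be Z''^2\big)\le C E^{1/2}\,D ,
\end{equation*}
where $D$ is the dissipation. Smallness $\eps_0$ then closes a continuity argument, combined with local well-posedness of the parabolic system \eqref{29}, which is routine. The result is $\sup_t E(t)\lesssim E(0)$, which gives the $H^1$ orbital bound with constant $2$ once the norms are normalized, together with $\int_0^\infty D\,dt<\infty$.

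Asymptotic stability follows from this integrability. We have $\int_0^\infty(\|U'\|^2+\|V'\|^2)\,dt<\infty$, and the equation gives $|\frac{d}{dt}\|U'\|^2|\lesssim$ dissipation of the next order, which requires one more derivative in the energy. Together these imply $\|U'(t)\|_{L^2}\to0$. Then $\|U\|_{L^\infty}\le\|U\|^{1/2}\|U'\|^{1/2}\to0$, and $L^p$ decay for $2<p\le\infty$ follows by interpolating with the uniform $L^2$ bound.

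The main obstacle is choosing the weights in $E$. The terms $\int\phi' W' Z'$ and $\int \phi'' W W'$ have no sign, and they must be absorbed using only monotonicity and $1+\phi_+>0$. We would first try the weight $\theta=1/(1+\phi)$ in the $Z$-component, together with a mixed term $\kappa\int W'Z$ with small $\kappa$, to symmetrize the hyperbolic part.
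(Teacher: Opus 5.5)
Your proposal has a genuine gap at its foundation. The Matsumura--Nishihara antiderivative framework is not available under the theorem's hypotheses, and the energy estimate that would make it work is only stated as a target.

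\textbf{Why the antiderivatives are not available.} To have $W=\int_{-\infty}^y U$ and $Z=\int_{-\infty}^y V$ in $L^2$ you need decay beyond $U_0\in H^1$, $V_0\in L^2$. You also need \emph{both} masses to vanish. System \eqref{29} is in conservation form, so $\int U$ and $\int V$ are separately conserved. Translating $(\phi,\psi)$ by $\mu$, however, moves the mass vector only along $\mu(\phi_+-\phi_-)(1,-\si)$. So even for $L^1$ data, the one shift you fix at time zero cannot in general also make $\int V_0=0$. When it does not, $Z_0$ tends to a nonzero constant at $+\infty$. The excess leaves as a diffusion wave of the other characteristic family: the speeds are $\pm\sqrt{1+u}$, and the kink is a Lax shock of the $+$ family. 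Approximating the data does not help, because your continuity argument closes in terms of $E(0)\ni\|W_0\|^2+\|Z_0\|^2$, which \eqref{d:10} does not control.

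\textbf{Why you cannot simply drop the zeroth-order level.} That level is the only place where monotonicity helps in your scheme. For the first-order part, your weight gives $\frac{d}{dt}\frac12\int\big(W^2+\frac{Z^2}{1+\phi}\big)=\frac{\si}{2}\int\frac{\phi'}{(1+\phi)^2}Z^2\le0$, so $\theta=1/(1+\phi)$ on $Z$ is the right guess. At the level of $(U,V)=(W',Z')$ with the natural symmetrizer, one gets instead $\frac{d}{dt}\frac12\int\big((1+\phi)U^2+V^2\big)=-\frac{\si}{2}\int\phi'U^2\ge0$. This is the wrong sign, since $\si>0$ for a decreasing kink. Your claim that $\phi'<0$ ``produces a dissipative term'' is therefore true only after integration.

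\textbf{The core estimate is not carried out.} Even granting the framework, you do not show how the following cross terms are absorbed: $\be\int\frac{\phi'}{(1+\phi)^2}ZZ'$ generated by the weight, the dispersive coupling $\ga W'''$ against the weighted $Z$, and $\int\phi'W'Z'$. No smallness of $\phi_--\phi_+$ is assumed, so this absorption is not automatic. The constant $2$ in the orbital bound also does not follow from a weighted energy whose equivalence constants depend on $1+\phi_\pm$ and $\ga$. The paper itself only obtains $C_1(\phi)$ of \eqref{419}.

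\textbf{Comparison with the paper.} The paper never integrates; it runs the energy identity on $(U,V)$ directly, which is why no mass or decay condition appears. It makes the shift dynamical, $\mu'(t)=\dpr{\phi'-\ga\phi'''+\phi\phi'}{U}+\dpr{\psi'}{V}$, so that all modulation terms collapse to $\mu'^2\ge0$. For $\int\phi'U^2$ it adds a second translation $\nu(t)=(N+\si)t$ of the argument of $U$. Be warned that this device does not do what is claimed. It merely relabels the spatial variable of $U$, so differentiating $\int\phi(y+\mu-\nu)\,U^2(t,y)\,dy$ in time produces an extra $-\frac{\nu'}{2}\int\phi'U^2$, which the paper omits. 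That term cancels the $N$ and leaves $\frac12\int(\si\phi'-\al\phi'')U^2$, whose leading part $\frac{\si}{2}\int\phi'U^2$ is exactly the wrong-sign term above. In short, your route has the favorable sign but needs hypotheses the theorem does not make, while the paper's route fits the hypotheses but loses the sign. Neither, as written, supplies a favorable-sign mechanism for $H^1\times L^2$ perturbations without a mass condition, and that is the missing idea. It could come either from extra assumptions on the data together with diffusion waves for the second conserved quantity, or from a weighted energy with a weight monotone along the kink combined with a dynamical shift, in the spirit of the $a$-contraction method of Kang and Vasseur. The final step, from $\int_0^\infty\|U'\|^2dt<\infty$ plus uniform continuity to $\|U'(t)\|\to0$ and then Gagliardo--Nirenberg, is essentially the same in your sketch and in the paper.
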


\section{Preliminaries} 
This section establishes notation and preliminary results needed for the sequel.  First, the standard spatial Lebesgue spaces are denoted by 
$L^p(\rone)$, while the Sobolev space $H^1(\rone)$ also makes an appearance. Specifically, we record their respective norms\footnote{ Hereafter, we use the notation $\|f\|$ to denote $\|f\|_{L^2(\rone)}$.} 
$$
 \|f\|_p=\left(\int_{-\infty}^{+\infty} |f(x)|^p dx\right), \|f\|_{H^1}=\|f'\|+\|f\|,
$$
One can also define $H^k(\rone)=\{ f: \|f\|_{H^k}=\|f^{(k)}\|+\|f\| \}$ and $H^\infty(\rone)=\cap_{k=1}^\infty H^k(\rone)$. 
We define the Fourier transform and its inverse by
$$
\hat{f}(\xi)=\int_{-\infty}^{+\infty} f(x) e^{-2\pi i x \xi} dx, f(x) =\int_{-\infty}^{+\infty} \hat{f}(\xi) e^{2\pi i x \xi} d\xi. 
$$
We frequently employ the Sobolev embedding estimate and its refinement, the Gagliardo-Nirenberg inequality: 
\begin{eqnarray}
	\label{gn:10} 
& & 	\|u\|_{L^p}\leq C \|u\|_{H^1}, 2\leq p\leq \infty \\
	\label{gn:15} 
	& & \|u\|_{L^p}\leq C \|u'\|^{\f{1}{2}-\f{1}{p}} \|u\|^{\f{1}{2}+\f{1}{p}},  2\leq p\leq \infty.
\end{eqnarray}
In particular, $H^1(\rone)$ is an algebra,  since $\|u^2\|_{H^1(\rone)}\leq C \|u\|_{L^\infty} \|u\|_{H^1}\leq C  \|u\|_{H^1}^2$.  
\subsection{Some reductions for the viscous Boussinesq}
We now derive a solution formula for the forced viscous Boussinesq problem. Our goal is to establish a local well-posedness result for the nonlinear evolution problem \eqref{12}, subject to the boundary conditions as in \eqref{102}. To this end, consider kinks $\phi$, solving  \eqref{14}, and the decomposition $u(t,x)=\phi(x-\si t)+w(t,x)$. Then, $w$ solves 
\begin{equation}
	\label{400} 
	w_{tt} - w_{txx}+ B w_{xxxx} - w_{xx}= \f{1}{2} \p_{xx} (2\phi(\cdot-\si t)w+w^2).
\end{equation}
We supplement \eqref{400} with initial data $w_0(x)=w(0,x)\in H^1(\rone), w_1(x) = w_t(0,x)\in L^2(\rone)$. 

Based on this formulation, we consider the following forced linear viscous Boussinesq problem: 
\begin{equation}
	\label{410} 
\left\{\begin{matrix}
		q_{tt} - q_{txx}+ B q_{xxxx} - q_{xx}=\p_x^2(N(t,x)), \\
		q(0,x) \in H^1(\rone),  q_t(0,x)\in L^2(\rone). 
	\end{matrix}
	\right. 
\end{equation}
Note that the initial data for (2.4) are now localized, with the nonlinearity appearing under a second derivative—the natural formulation arising from the nonlinear problem
 \eqref{400}. The next step is to construct the Green's function. 
\subsection{Green's function for the free viscous Boussinesq problem}
Applying the spatial Fourier transform to \eqref{410}  with zero right-hand side yields the characteristic equation 
$$
\la^2 + 4\pi^2 \xi^2 \la +(16 \pi^4 \xi^4 B +4\pi^2 \xi^2)=0,
$$
which has the solutions 
$$
\la_\pm(\xi) =-2\pi^2\xi^2 \pm \sqrt{4\pi^4\xi^4(1-4 B)-4\pi^2\xi^2}.
$$
Note that $\la_\pm(\xi)$ is always complex valued for small $|\xi|$, however, under the assumption $B>0$, there is
\begin{equation}
	\label{411} 
	\Re\la_-<\Re\la_+<-c_0 \xi^2, 
\end{equation}
indicating a parabolic behavior. An elementary calculation shows that the free solution of \eqref{410} can be written in the form 

$$
	\hat{q}(t, \xi)=\f{\la_-(\xi) e^{t \la_+(\xi)}-\la_+(\xi) e^{t \la_-(\xi)}}{\la_-(\xi)-\la_+(\xi)} \hat{q}(0,\xi) + \f{e^{t \la_+(\xi)}-e^{t \la_-(\xi)}}{\la_+(\xi)-\la_-(\xi)} \hat{q}_t(0,\xi),
$$
where the second formula makes sense even for values $\xi: \la_+(\xi)=\la_-(\xi)$. 

A simple application of the Duhamel's formula yields the solution to the full problem \eqref{410} as follows 
\begin{eqnarray}
	\label{420} 
		\hat{q}(t, \xi) &=& \f{\la_-(\xi) e^{t \la_+(\xi)}-\la_+(\xi) e^{t \la_-(\xi)}}{\la_-(\xi)-\la_+(\xi)} \hat{q}(0,\xi) + \f{e^{t \la_+(\xi)}-e^{t \la_-(\xi)}}{\la_+(\xi)-\la_-(\xi)} \hat{q}_t(0,\xi) \\
		\nonumber
		& & -4\pi^2 \int_0^t \hat{G}(t-s, \xi) [\xi^2 \hat{N}(s, \xi)] ds,
\end{eqnarray}
where 
$$
\hat{G}(t,  \xi) =  \f{e^{t\la_+(\xi)}-e^{t\la_-(\xi)}}{\la_+(\xi)-\la_-(\xi)}. 
$$
We have the following Lemma.
\begin{lemma}
	\label{le:40} 
	There exists constants $C, c_0$, so that the  Green function $G$ satisfies 
	\begin{eqnarray}
		\label{450} 
	& & 	|\hat{G}(t, \xi)|\leq C(1+\xi^2)^{-1}, 	\\
		\label{451} 
	& & 	|\hat{G}(t, \xi)|\leq 
		C e^{-c_0 t |\xi|^2}
	\end{eqnarray}
\end{lemma}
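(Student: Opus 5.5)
The plan is a direct case analysis in $\xi$ of the explicit symbol
$$
\hat{G}(t,\xi)=\f{e^{t\la_+(\xi)}-e^{t\la_-(\xi)}}{\la_+(\xi)-\la_-(\xi)},\qquad \la_\pm(\xi)=-2\pi^2\xi^2\pm \sqrt{D(\xi)},\quad D(\xi)=4\pi^4\xi^4(1-4B)-4\pi^2\xi^2 .
$$
Throughout I use \eqref{411}, $\Re\la_\pm\le -c_0\xi^2$. For every $\xi$, the mean value theorem along the segment from $\la_-$ to $\la_+$ gives
$$
|\hat{G}(t,\xi)|\le t\,\sup_{\theta\in[0,1]} \left|e^{t(\theta\la_++(1-\theta)\la_-)}\right|\le t\, e^{-c_0 t\xi^2}.
$$
This also handles the points where $\la_+=\la_-$. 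I would then split into three frequency ranges.

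\emph{High frequencies, $|\xi|\ge K$.} Both bounds hold here. If $B\ne 1/4$, then $|\sqrt{D}|\sim 2\pi^2|1-4B|^{1/2}\xi^2$, so $|\la_+-\la_-|\ge c\,\xi^2$. Since $|e^{t\la_\pm}|\le e^{-c_0t\xi^2}$, this gives
$$
|\hat{G}|\le C\xi^{-2}e^{-c_0t\xi^2},
$$
which implies \eqref{450} and \eqref{451} at once. If $B=1/4$, then $\sqrt{D}=2\pi i|\xi|$ and $|\la_+-\la_-|=4\pi|\xi|$. In that case I would use $\min\big(t,\,C|\xi|^{-1}\big)e^{-c_0t\xi^2}$ together with $\sup_t t\,e^{-c_0t\xi^2/2}\le C\xi^{-2}$. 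This again yields $C(1+\xi^2)^{-1}e^{-c_0t\xi^2/2}$, after shrinking $c_0$.

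\emph{Intermediate frequencies, $\delta\le|\xi|\le K$.} On this compact set, the mean value bound gives $|\hat{G}|\le t\,e^{-c_0t\xi^2}\le C_\delta\, e^{-c_0t\xi^2/2}$. Since $(1+\xi^2)^{-1}$ is comparable to $1$ here, both bounds follow, again with $c_0$ halved.

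\emph{Low frequencies, $|\xi|\le\delta$.} This is the main obstacle, and here the statement cannot hold as worded, uniformly in $t$. At $\xi=0$ we have $\la_\pm=0$, so $\hat{G}(t,0)=t$. More generally, for small $\xi$,
$$
\la_\pm=-2\pi^2\xi^2\pm 2\pi i|\xi|+O(|\xi|^3),\qquad \hat{G}\approx e^{-2\pi^2 t\xi^2}\,\f{\sin(2\pi|\xi| t)}{2\pi|\xi|}.
$$
This is of size $t$ when $|\xi|t\lesssim 1$, so it is not bounded uniformly in $(t,\xi)$. What one can prove is the sharp low-frequency bound
$$
|\hat{G}|\le C\min\big(t,|\xi|^{-1}\big)e^{-c_0t\xi^2}.
$$
This follows from $|\la_+-\la_-|\ge c|\xi|$ together with the mean value bound. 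Consequently \eqref{450}--\eqref{451} hold on $|\xi|\ge\delta$ as proved above, and on $|\xi|\le\delta$ they hold only for bounded times or after absorbing a factor of $|\xi|$.

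The place where the lemma is used, \eqref{420}, always pairs $\hat{G}$ with $\xi^2$. So the version that is actually needed is the one with $\xi^2$ weight: $|\xi^2\hat{G}|\le C|\xi|e^{-c_0t\xi^2}\le C$ at low frequency. I would state the lemma in that corrected form and check that \eqref{420} only ever requires it.
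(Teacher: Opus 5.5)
Your analysis is correct. The mismatch with the paper is a defect of the lemma and of the paper's proof, not a gap in your proposal.

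Your high-frequency step is the paper's: divide by $|\la_+-\la_-|\gtrsim\xi^2$ and use \eqref{411}. You improve it in two ways. First, you keep the factor $e^{-c_0t\xi^2}$, which gives \eqref{450} and \eqref{451} at once. Second, you handle $B=1/4$. There the discriminant $4\pi^4\xi^4(1-4B)-4\pi^2\xi^2$ equals $-4\pi^2\xi^2$, so the paper's claim that it is of size $\xi^4$ fails, and $|\la_+-\la_-|=4\pi|\xi|$ only.

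On a compact frequency set, the paper deduces $|\hat G|\le C$ from ``$|e^{tz}-1|\le C_K|z|$ for $z\in K$''. This silently drops the factor $t$. The correct inequality for $\Re z\le 0$ is $|e^{tz}-1|\le t|z|$, and it yields only $|\hat G|\le t\,e^{t\Re\la_+}$, which is your mean-value bound. That bound is uniform on $|\xi|\ge\delta$ but not near $\xi=0$. There $\hat G(t,0)=t$, and in fact $|\hat G(t,\xi)|\ge c\,t$ on the whole interval $|\xi|\le c'/t$ for $t\ge1$. So the failure is not confined to a null set, and neither \eqref{450} nor \eqref{451} holds with constants independent of $t$.

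One claim in your last paragraph needs correcting: \eqref{420} does not always pair $\hat G$ with $\xi^2$. The data term $\hat G(t,\xi)\hat q_t(0,\xi)$ carries no weight, and Proposition~\ref{prop:54} estimates it with \eqref{450}. Your corrected bound still covers that term, for either of two reasons.
\begin{itemize}
\item The proposition is local in time, and $(1+|\xi|)|\hat G|\le C(1+t)$ suffices on $[0,T]$.
\item In the setting of \eqref{102}, $w_1=\p_x(f_1+\si\phi)$, so $\hat w_1$ carries a factor $\xi$, and $|\xi|(1+|\xi|)|\hat G|\le C$ uniformly in $t$.
\end{itemize}
Your three frequency ranges combine into the bound $|\hat G(t,\xi)|\le C\min\bigl(t,|\xi|^{-1},\xi^{-2}\bigr)e^{-c_0t\xi^2}$, after shrinking $c_0$. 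This is the form the lemma should take.
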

\begin{proof}
	For large values of $|\xi|$, we have  $|4\pi^4\xi^4(1-4 B)-4\pi^2\xi^2|\sim \xi^4$, which means that $|\la_+(\xi)-\la_-(\xi)|\sim \xi^2$. So, for $|\xi|>>1$, 
	$$
	|\hat{G}(t, \xi)|=\f{|e^{t \la_-(\xi)}-e^{t  \la_+(\xi)}|}{|\la_+(\xi)-\la_-(\xi)|}\leq C\xi^{-2}.
	$$
where we have taken advantage of  the elementary inequality 
$|e^{t \la_\pm(\xi)}|=e^{t \Re \la_\pm(\xi)}\leq 1$. For values of $\xi$ ranging in a compact set $K$,  we have the inequality $|e^{t z}-1|\leq C_K|z|$ for all $z\in K$, whence 
	$$
|\hat{G}(t, \xi)|=e^{ t \Re \la_+(\xi)}\f{|e^{t  \la_-(\xi)-\la_+(\xi)}-1|}{|\la_+(\xi)-\la_-(\xi)|}\leq C.
$$
Either way, we obtain \eqref{450}. The bound \eqref{451}, which in view of \eqref{450}, is only relevant for large $|\xi|$, follows from \eqref{411}. 
\end{proof}
We are now ready for the local well-posedness result. 
\subsection{Local well-posedness for \eqref{400}}
\begin{proposition}
	\label{prop:54} 
	Suppose that $w_0\in H^1(\rone), w_1\in L^2(\rone)$. Then, there exists time \\  $T=T((\|w_0\|_{H^1(\rone)}, \|w_1\|_{L^2(\rone)})>0$ and a unique solution 
	$w\in C([0,T), H^1(\rone))$ of \eqref{400}. More specifically, $w$ satisfies the integral equation \eqref{420}, with the nonlinearity $N(t,x)=2\phi(x-\si t)w(t,x) +w^2(t,x)$. 
	A posteriori, the solution $w$ described above is in fact infinitely smooth, $w\in C^\infty([0,T), H^\infty(\rone))$. 
	
	The blow-up alternative for \eqref{400} is as follows - the solution $w$ blows up at time $T^*$ if and only if 
	\begin{equation}
		\label{90P}
		\limsup_{t\to T^*-} \|w(t, \cdot)\|_{H^1(\rone)}=+\infty. 
	\end{equation}
\end{proposition}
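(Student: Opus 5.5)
We will construct the solution by a contraction argument for the Duhamel formulation \eqref{420}, with $N(t,x)=2\phi(x-\si t)w+w^2$, in the space $X_T=C([0,T],H^1(\rone))$ equipped with $\sup_{0\le t\le T}\|w(t)\|_{H^1}$. Denote by $\Phi(w)$ the right-hand side of \eqref{420}.

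The first step is the linear estimate for the homogeneous part. For the multiplier $\f{\la_-e^{t\la_+}-\la_+e^{t\la_-}}{\la_--\la_+}$ we will show that it is bounded uniformly in $t\ge 0,\xi$. For large $|\xi|$ this uses $|\la_\pm|\lesssim \xi^2$ and $|\la_+-\la_-|\sim\xi^2$. For bounded $\xi$ we rewrite the multiplier as $e^{t\la_+}-\la_+\hat G(t,\xi)$ and use Lemma \ref{le:40}. The $w_1$ term carries the multiplier $\hat G$, which by \eqref{450} gains $(1+\xi^2)^{-1}$. Together these give $\|\cdot\|_{H^1}\lesssim \|w_0\|_{H^1}+\|w_1\|_{L^2}$, and continuity in time follows from dominated convergence.

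The second step, which is the main point, is the Duhamel term $\int_0^t \hat G(t-s,\xi)\xi^2\hat N(s,\xi)\,ds$. We need to place $(1+|\xi|)\xi^2\hat G(t-s)$ against $\hat N\in H^1$, that is, we need $|\xi|^2|\hat G|$ to be integrable in time with at most one derivative loss. We interpolate \eqref{450} and \eqref{451}:
$$
|\xi|^2|\hat G(t,\xi)|\lesssim \min\bigl(1,\ e^{-c_0t\xi^2}\xi^2\bigr)\lesssim (t\xi^2)^{-\theta}\ \ \text{for large }|\xi| .
$$
Choosing $\theta=1/2$ gives $|\xi|^2|\hat G(t-s)|\lesssim 1+(t-s)^{-1/2}|\xi|\cdot|\xi|^{0}$, and more carefully $|\xi|^2|\hat G|\le C\min(1,|\xi|^2e^{-c_0t\xi^2})$. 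Since $\sup_\xi |\xi|e^{-c_0t\xi^2}\lesssim t^{-1/2}$, we get
$$
\bigl\|\,\xi^2\hat G(t-s)\hat N(s)\bigr\|_{H^1}\lesssim (1+(t-s)^{-1/2})\|N(s)\|_{H^1}.
$$
The singularity $(t-s)^{-1/2}$ is integrable, so the Duhamel term is bounded by $C(T+\sqrt T)\sup_s\|N(s)\|_{H^1}$. Since $H^1$ is an algebra and $\phi,\phi'$ are bounded by Proposition \ref{prop:10}, we have $\|N\|_{H^1}\le C(\|w\|_{H^1}+\|w\|_{H^1}^2)$, with a similar Lipschitz bound for differences. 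Hence $\Phi$ maps a ball of radius $2C(\|w_0\|_{H^1}+\|w_1\|)$ in $X_T$ into itself and is a contraction once $T$ is small depending on the data size. This yields existence and uniqueness, and uniqueness extends to the whole space $X_T$ by the standard Gronwall-type argument on the difference.

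For smoothness we will use the parabolic gain. Iterating the same estimate with the weight $|\xi|^{k}$ gives $t^{k/2}\|w(t)\|_{H^{1+k}}$ bounded on $[0,T)$, using $\sup_\xi |\xi|^k e^{-c\,t\xi^2}\lesssim t^{-k/2}$ together with a bootstrap in $k$ on intervals $[\delta,T)$. Time regularity then follows from the equation. Finally, the blow-up alternative follows because $T$ depends only on $\|w(t_0)\|_{H^1}$ and $\|w_t(t_0)\|_{L^2}$. We also need the control of $\|w_t\|_{L^2}$ by $H^1$ norms, which holds at positive times by smoothing. The solution can therefore be continued as long as $\|w\|_{H^1}$ stays bounded, so \eqref{90P} holds at any maximal time $T^*<\infty$. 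The delicate point is the careful treatment of the $(t-s)^{-1/2}$ singularity and of the time derivative in the restart argument.
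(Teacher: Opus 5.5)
Your proposal is correct and follows the paper's proof: a contraction for the Duhamel formula \eqref{420} in $C([0,T],H^1)$, parabolic smoothing from the heat-like bound on $\hat{G}$, and continuation for the blow-up alternative. Your explicit control of $\|w_t\|_{L^2}$ at the restart time is a point the paper glosses over.

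One remark on the Duhamel step: the interpolation through \eqref{451} is an unnecessary detour, and its intermediate $(t\xi^2)^{-\theta}$ line is not right as written. Estimate \eqref{450} alone gives $\xi^2|\hat{G}(t,\xi)|\le C$, so you can put the full weight $(1+|\xi|)$ on $\hat{N}$ and obtain $C t\sup_s\|N(s)\|_{H^1}$ with no $(t-s)^{-1/2}$ singularity, exactly as the paper does.
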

\begin{proof}
	The proof is based on a fixed point method for \eqref{420} for $w$, with the nonlinearity $N$, as specified. As is well-known, this reduces to estimates in the appropriate spaces. We have 
	\begin{eqnarray*}
	& & 	\|(1+|\xi|) \f{\la_-(\xi) e^{t \la_+(\xi)}-\la_+(\xi) e^{t \la_-(\xi)}}{\la_-(\xi)-\la_+(\xi)}  \hat{w}_0(\xi)\|_{L^2}\leq C \|(1+|\xi|) \hat{w}_0\|_{L^2}\leq C \|w_0\|_{H^1}, \\
	& & \|(1+|\xi|)  \f{e^{t \la_+(\xi)}-e^{t \la_-(\xi)}}{\la_+(\xi)-\la_-(\xi)} \hat{w}_1 (0,\xi)\|_{L^2}\leq C\|w_1\|_{L^2},
	\end{eqnarray*}
where in the second estimate, we have used \eqref{450}. 
For the Duhamel's term, we have by \eqref{450}, 
\begin{eqnarray*}
	& &  \|(1+|\xi|)  \int_0^t \hat{G}(t-s, \xi) [\xi^2 \hat{N}(s, \xi)] ds\|_{L^2}\leq C \int_0^t \| \xi^2  \hat{G}(t-s, \xi)  (1+|\xi|) \hat{N}(s, \xi)]\|_{L^2} ds \\
	&\leq & C \int_0^t \|(1+|\xi|) \hat{N}(s, \xi)\|_{L^2}\leq C t \sup_{0<s<t} [\|2\phi(\cdot-\si s) w\|_{H^1}+\|w^2(s, \cdot)\|_{H^1}]\leq \\
	&\leq & C t \sup_{0<s<t} 
	[\|w(s, \cdot)\|_{H^1}+ \|w(s, \cdot)\|_{H^1}^2].
\end{eqnarray*}
Selecting appropriately  small time $T\sim \f{1}{\|w_0\|_{H^1}+\|w_1\|_{L^2}+\|w_0\|_{H^1}^2+\|w_1\|_{L^2}^2}$ ensures a solution in the interval $[0,T]$, by the Banach fixed point theorem. The blow-up alternative \eqref{90P} follows from this argument as well, since one can clearly propagate the solution, by standard continuity  argument, past any time $T$, for which $	\limsup_{t\to T-} \|w(t, \cdot)\|_{H^1(\rone)}<+\infty$

Regarding the smoothness of the solution $w$, it suffices to observe that due to \eqref{411}, we have the estimate for the Green's function $|\hat{G}(t, \xi)|\leq C e^{-t \xi^2}$, similar to the heat kernel. It follows that the solution to the integral equation \eqref{420} experiences infinite smoothing effect, as long as $t>0$. Same argument applies to any number of time derivatives of $w$, whence $w\in C^\infty((0,T), H^\infty(\rone))$. {\it It is worth noting however that the infinte smoothness just established, applies only up to potential  blow-up time, and  in no way precludes the possibility of a finite time blow-up. } 
\end{proof}

\section{Asymptotic stability of the kinks}
Due to the local well-posedness result, Proposition \ref{prop:54}, we have a short time smooth solution of \eqref{102}. This translates in a short time  solution for \eqref{29} and so of \eqref{105}. Thus, we have a time, say $t_*\leq +\infty$, so that $U,V$ exist as smooth functions and  classical solutions of \eqref{105}. We now run a classical continuity argument to show that in fact $t_*<+\infty$ leads to a contradiction.  

Before we proceed with the proof, let us sort out the relation between initial data of the transformed problem \eqref{105}  $U_0, V_0$ and the initial data  of the original problem \eqref{102}. In fact, going through the concrete tranformations, we see that 
\begin{eqnarray*}
	U_0 (x) &=& u_0(x)-\phi(x);  \\
	V_0(x) &=& f_1(x) +\si \phi(x) - \al(u_0'(x)-\phi'(x)).
\end{eqnarray*}
Thus, the smallness assumptions \eqref{d:10} put forward in Theorem \ref{theo:10} translates into the following, more straightforward 
\begin{equation}
	\label{d:02} 
	\|U_0\|_{H^1}+\|V_0\|<<1. 
\end{equation}
This will be the basis of our contradiction argument below. 
\subsection{Energy estimates up to the eventual blow-up time}
As alluded above, we are running a contradiction argument, based on the assumption $t_*<+\infty$. To reiterate, the functions $U,V$ are classical smooth solutions of \eqref{105} for $t\in (0, t_*)$. Our next goal is to establish {\it a posteriori} estimates for $U,V$, which shows that blow-up cannot occur in any finite time. 

In order to focus on a better formulated result, we have the following Proposition. 
\begin{proposition}
	\label{prop:65} 
There exists $\eps_0=\eps_0(\phi, \al, \be, \ga)>0$, so that whenever 
\begin{equation}
	\label{297} 
	U_0\in H^1(\rone), V_0\in L^2(\rone):  \|U_0\|_{H^1}+\|V_0\|<\eps_0, 
\end{equation}
the solution $U,V$ to \eqref{105} extends globally in time. Moreover, they are smooth functions, and they satisfy the estimates 
\begin{eqnarray}
	\label{w:10} 
& & 	\|U(t)\|_{H^1}+ \|V(t)\|\leq C_1(\phi) (\|U_0\|_{H^1}+\|V_0\|) \\
		\label{w:20}
	& & 	\int_0^\infty  [\|U'(t)\|^2+ \|U''(t)\|^2+\|V'(t)\|^2]dt\leq C_2(\phi) (\|U_0\|_{H^1}^2+\|V_0\|^2). 
\end{eqnarray}
\end{proposition}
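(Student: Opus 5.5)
We argue by a weighted energy (Lyapunov-type) estimate for the linearized system \eqref{105}, combined with a bootstrap on the interval $[0,t_*)$. To keep the argument simple, we take $\mu=\nu\equiv 0$; the modulation is needed only for the asymptotic statements in Theorem \ref{theo:10}. The key structural facts are:
\begin{itemize}
\item monotonicity, $\phi'<0$;
\item the lower bound $\phi>-1$, which gives $1+\phi\geq 1+\phi_+>0$.
\end{itemize}
Together these make the quadratic form associated with $-\ga\p_x^2+(1+\phi)$ coercive.

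\textbf{The energy functional.} The natural energy, suggested by the structure of \eqref{105}, is
$$
E(t)=\f12\int\Big[\ga (U')^2+(1+\phi)U^2+V^2\Big]dy .
$$
Here $\phi$ is evaluated at the moving argument, so $\p_t\phi=-\si\phi'$ in the frame $y=x-\si t$; the transport terms $-\si U',-\si V'$ are already present in \eqref{105}.

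\textbf{Differentiating the energy.} We multiply:
\begin{itemize}
\item the $U$ equation by $-\ga U''+(1+\phi)U$;
\item the $V$ equation by $V$.
\end{itemize}
We then integrate by parts. The coupling terms $\int V'(-\ga U''+(1+\phi)U)$ and $\int V(-\ga U'''+((1+\phi)U)')$ cancel exactly, since $U'+\f12(2\phi U)'=((1+\phi)U)'$.

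The dissipative terms give
$$
-\al\ga\|U''\|^2-\al\int(1+\phi)(U')^2-\be\|V'\|^2 .
$$
In addition, $-\al\int(1+\phi)U U''$ produces $+\f{\al}2\int\phi''U^2$. The transport terms produce a commutator $\f{\si}{2}\int\phi' U^2$ (with sign depending on $\si$), together with $-\f{\si}{2}\int\phi'U^2$ from $\p_t\phi$. The remaining cubic contribution is $\int VUU'$ from $\f12(U^2)'$.

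\textbf{The main obstacle: potential terms.} The main obstacle is controlling the undifferentiated potential terms $\int (c_1\si\phi'+c_2\phi'')U^2$. These carry no derivative, so they cannot be absorbed directly by the dissipation.

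First, we would exploit the sign. With $\si$ chosen so the combination reads $-\si\phi'$ with a favorable sign, we get $\int|\phi'|U^2\geq 0$, a localized \emph{good} term. This is precisely why monotonicity is assumed.

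Second, we must handle the indefinite term $\f{\al}{2}\int\phi''U^2$. Since $\al$ is at our disposal and small, we would try to dominate it by $\int |\phi'| U^2$. This requires a pointwise bound $|\phi''|\leq C|\phi'|$, which follows from the asymptotics \eqref{f:1} together with $\phi'<0$ on compact sets. Choosing $\al$ small then makes this term harmless.

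If the sign of the transport term comes out wrong, the fallback is a modified weight: replace $(1+\phi)$ by $w(y)(1+\phi)$ and $V^2$ by $wV^2$ with $w=e^{\kappa\phi}$. This generates a term of the form $\kappa\si\int\phi' w(\cdots)$ whose sign we can choose. The cost is new cross terms $\int w' VU$, which we would absorb using smallness of $\kappa$.

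\textbf{Closing the bootstrap.} After these steps, we expect an inequality of the form
$$
E'(t)+c\big(\|U''\|^2+\|U'\|^2+\|V'\|^2\big)\leq \Big|\int VUU'\Big| .
$$
We bound the cubic term by
$$
\Big|\int VUU'\Big|\leq \|V\|\|U\|_\infty\|U'\|\leq C E^{1/2}\|U'\|^{3/2}\|U\|^{1/2},
$$
using \eqref{gn:15}. Since $\|U\|\lesssim E^{1/2}$, this is at most $C E\,\|U'\|^{3/2}$. For this to be absorbed we would rather use
$$
\|U\|_\infty\leq \|U\|^{1/2}\|U'\|^{1/2}\lesssim E^{1/4}\|U'\|^{1/2},
$$
which gives $\lesssim E^{3/4}\|U'\|^{3/2}$. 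Note that the dissipation controls $\|V'\|$ rather than $\|V\|$, so it is better to integrate by parts and write $\int VUU'=-\f12\int V'U^2$. Then
$$
\Big|\int VUU'\Big|\leq \|V'\|\,\|U\|_4^2\leq \|V'\|\,\|U\|^{3/2}\|U'\|^{1/2}\leq C E^{3/4}\big(\|V'\|^2+\|U'\|^2\big).
$$
Under the bootstrap hypothesis $E\leq 4C\eps_0^2$, this is absorbed into the dissipation.

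Finally, we integrate in time. Since $E\sim\|U\|_{H^1}^2+\|V\|^2$ by coercivity, this yields \eqref{w:10} and \eqref{w:20} on $[0,t_*)$. The blow-up alternative of Proposition \ref{prop:54} then forces $t_*=\infty$, and smoothness is inherited from Proposition \ref{prop:54}.
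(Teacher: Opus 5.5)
\textbf{There is a genuine gap.} Your plan breaks at the step you yourself flag as the main obstacle. The sign of the zeroth-order term $\int\phi'U^2$ is not at your disposal, and it is the wrong one.

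With $\mu=\nu\equiv0$, the profile $\phi(y)$ does not depend on $t$ in the frame $y=x-\si t$. So there is no separate $\p_t\phi$ contribution. In your bookkeeping that term would exactly cancel the transport commutator, leaving nothing at all to dominate $\f{\al}{2}\int\phi''U^2$. Carrying out your multiplier computation correctly gives
\[
E'(t)+\al\ga\|U''\|^2+\al\int(1+\phi)(U')^2+\be\|V'\|^2=\f{\si}{2}\int|\phi'|U^2+\f{\al}{2}\int\phi''U^2+\int VUU' .
\]
Here $\si$ is fixed by \eqref{161}, and its sign is dictated by the kink: multiplying \eqref{19} by $\Phi'=\phi'$ and integrating gives $\si\int(\phi')^2=\f{2}{3}m^3>0$. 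So $\f{\si}{2}\int|\phi'|U^2$ is a genuine source term, bounded only by $CE$, and Gronwall then gives only exponential bounds.

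The dissipation cannot absorb this term, because it only sees derivatives. Take $U=\eps\chi(y/L)$, $V=0$, $L\to\infty$: the right side tends to $\si m\eps^2\chi(0)^2$, while the dissipation is $O(\eps^2/L)$. A weight $e^{\kappa\phi}$ with small $\kappa$ produces only $O(\kappa)$ corrections, which cannot reverse an $O(1)$ term. The underlying reason is that the kink is a compressive Lax shock, $\sqrt{1+\phi_+}<\si<\sqrt{1+\phi_-}$. This is the same reason the plain $L^2$ estimate fails for viscous Burgers shocks, where one has to pass to integrated (antiderivative) perturbation variables.

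Be aware that the paper's own device for this step does not rescue it either. The paper re-centres the perturbation with $\nu(t)=(N+\si)t$, so that the transport coefficient becomes $\si-\nu'=-N$. But $\nu$ only relabels the spatial variable of $(U,V)$, and every term of the energy is invariant under that relabeling. Concretely, $\phi$ is then evaluated at $y+\mu(t)-\nu(t)$, so $\p_t\int\phi U^2$ contains an extra $-\nu'\int\phi'U^2$ that the paper's computation omits. Restoring it brings the coefficient back to $\si/2$.

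\textbf{A second, repairable error.} Your closing of the bootstrap does not work as written. The quantity $\|V'\|\,\|U\|^{3/2}\|U'\|^{1/2}$ has degree $3/2$ in the dissipated quantities, so it is not bounded by $CE^{3/4}(\|V'\|^2+\|U'\|^2)$; let $\|U'\|=\|V'\|=\delta\to0$ to see this. The paper handles the cubic term by substituting the $U$-equation, $V'=U_t-\si U'-\al U''$, into $\int VUU'=-\f12\int U^2V'$. This gives $-\f16\p_t\int U^3-\al\int U(U')^2$. The cubic is then put into the energy, controlled via $|\int U^3|\le\|U\|_\infty\|U\|^2$, and $|\al\int U(U')^2|\le\al\|U\|_\infty\|U'\|^2$ is absorbed by the dissipation.

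That fix is routine. The sign problem above is not: without a genuinely new ingredient, the energy identity yields no uniform-in-time bound.
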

\begin{proof} 
	First, a word about the strategy of the proof. We run the usual continuity argument. Specifically,  we start the small data as in \eqref{297}, then analyze the local solutions, which exist thanks to  Proposition \ref{prop:54}. We need to show that \eqref{w:10} persists forever, where $C_1(\phi)$ will be determined explicitly in the course of the argument, see \eqref{419} below.  The trick is to use \\  $\sup_{0<t<t_*}	\|U(t)\|_{H^1}+ \|V(t)\|\leq C_1(\phi)  (\|U_0\|_{H^1}+\|V_0\|)$ as an {\it a priori} bound and show that until it is satisfied, which maybe up to some finite time  $t_*$, we have in fact a better bound, say $\limsup_{t\to t_*-} \|U(t)\|_{H^1}+ \|V(t)\|\leq \f{C_1(\phi)}{2} (\|U_0\|_{H^1}+\|V_0\|)$. This argument shows that the bound $\sup_{0<t<t_*}	\|U(t)\|_{H^1}+ \|V(t)\|\leq C_1(\phi)  (\|U_0\|_{H^1}+\|V_0\|)$ persists for all $t_*<\infty$. 
	
Let us now proceed to the actual estimates. Consider  \eqref{105} and  take dot product of the first equation with $U$ and the second equation with $V$, and add them up. Recall however that $\phi=\phi(y+\mu(t))$, whereas $U=U(y+\nu(t),t), V=V(y+\nu(t),t)$.
We obtain 
\begin{equation}
\label{110} 
\begin{cases}
	\f{1}{2} \p_t\left( \|U\|^2+\|V\|^2\right)+\mu'(\dpr{\phi'}{U}+\dpr{\psi'}{V})+ \al \|U'\|^2+\be\|V'\|^2= \\
	= -\ga\dpr{U'''}{V}+\f{1}{2} \dpr{\p_x (2\phi U+U^2)}{V}.
\end{cases}	
\end{equation}
We now compute various terms, by taking into account the relation \eqref{105}, 
\begin{eqnarray*}
	-\ga\dpr{U'''}{V} &=& \ga \dpr{U''}{V'}= \ga \dpr{U''}{U_t+(\nu'-\si)   U'+\mu'\phi'-\al U''}= \\
	&=& -\f{\ga}{2} \p_t \|U'\|^2+\ga\mu'\dpr{U}{\phi'''}-\ga \al \|U''\|^2. 
\end{eqnarray*}
On the other hand, integration by parts yields 
\begin{eqnarray*}
& & 	\f{1}{2}  \dpr{\p_x (2\phi U+U^2)}{V} = -\dpr{\phi U}{V'}-\f{1}{2}  \dpr{U^2}{V'}=  \\
	&=& - \dpr{\phi U}{U_t+(\nu'-\si) U'+\mu'\phi'-\al U''}- \f{1}{2}  \dpr{U^2}{U_t- (\nu'-\si)   U'+\mu'\phi'-\al U''}. 
\end{eqnarray*}
Term by term, and integration by parts 
\begin{eqnarray*}
	- \dpr{\phi U}{U_t} & = & -\f{1}{2} \p_t \left( \int \phi U^2 \right) +\f{1}{2}  \mu' \int \phi' U^2. \\
	(\si-\nu')  \dpr{\phi U}{U'} &=& \f{\nu'-\si}{2} \int \phi' U^2; \\
	- \mu' \dpr{\phi U}{\phi'}  &=&    -\mu' \int \phi\phi' U, \\
\al  \dpr{\phi U}{U''} &=& - \al \int \phi (U')^2 + \f{\al}{2}  \int \phi'' U^2. \\
-\f{1}{2} \dpr{U^2}{U_t} &=& -\f{1}{6} \p_t \int U^3(t,y) dy,\\
-\f{1}{2} \mu'\dpr{U^2}{\phi'} &=& -\f{1}{2} \mu'\int \phi' U^2,       \\ 
\f{\al}{2}   \dpr{U^2}{U''} &=& - \al \int  U (U')^2, 
\end{eqnarray*}
	All in all, we arrive at the identity, 
	\begin{eqnarray*}
	& &	\f{1}{2} \p_t\left( \int (1+ \phi) U^2+\f{1}{3} \int U^3+\|V\|^2+ \ga \|U'\|^2 \right)+\\
	& & + \al (\|U'\|^2+\int \phi (U')^2) + \be\|V'\|^2 + \ga \al \|U''\|^2+ \al \int  U (U')^2+ \\
& & + \mu'\left(\dpr{\phi'-\ga \phi'''+  \phi\phi'}{U} +\dpr{\psi'}{V}\right)+\f{1}{2}  \int  ((\si-\nu')  \phi' - \al \phi'')  U^2  =0.
	\end{eqnarray*}
 Here, the important terms to take care first are on the last line. More precisely, we need to make an appropriate selection of the functions $\mu, \nu$. First, we shall select $\nu(t):=N t+\si t$ for sufficiently large $N=N(\phi)$, to be determined in a moment. 
 
 The next step is to select $\mu'$, so that $\mu'\left(\dpr{\phi'-\ga \phi'''+  \phi\phi'}{U} +\dpr{\psi'}{V}\right)\ge 0$. Thus, the natural choice  is to take  
 \begin{equation}
 	\label{300}
 	\mu'(t)= \dpr{\phi'-\ga \phi'''+  \phi\phi'}{U} +\dpr{\psi'}{V}, \ \ \mu(0)=0. 
 \end{equation}
% \begin{equation}
% 	\label{300} 
% 	\begin{cases}
% 		
% 		=\int_{-\infty}^{+\infty} [(\phi'-\ga \phi'''+\phi\phi')(y+\mu(t)-\nu(t)) U(t,y) dy +    \psi'(y+\mu(t)-\nu(t)) V(t, y)] dy.
% 	\end{cases}
% \end{equation}
 Note that, 
 	\begin{eqnarray*}
 \dpr{\phi'-\ga \phi'''+  \phi\phi'}{U} +\dpr{\psi'}{V} &=&  \int_{-\infty}^{+\infty} (\phi'-\ga \phi'''+\phi\phi')(y+\mu(t)-\nu(t)) U(t,y) dy +  \\
 &+& 
\int_{-\infty}^{+\infty}  \psi'(y+\mu(t)-\nu(t)) V(t, y) dy=:G(t)
	\end{eqnarray*}
So, {\it assuming that we have control over $\|U(t, \cdot)\|, \|V(t, \cdot)\|$} till some time, say $t_*$, then the ODE for $\mu$ does not blow-up. In fact 
\begin{equation}
	\label{3001}
	\sup_{0<t<t_*}|\mu'(t)|=\sup_{0<t<t_*} |G(t)|\leq C \sup_{0<t<t_*}(\|U(t, \cdot)\|+ \|V(t, \cdot)\|),
\end{equation}
where $C=C(\phi)$. 

% 
%Indeed, \eqref{300} is an ODE for $\mu$, in the form $\mu'(t)=G(t, \mu(t))$, where $G$ is Lipschitz in the $\mu$ variable, due to the smoothness of $\phi$. Also, in order to justify the construction of $\mu(t)$, we shall need to establish that $G$ is continuous in the $t$ variable, which would follow from the Lipschitzness of $G$. To this end, it will suffice to establish that the following two functions 
%\begin{equation}
%	\label{310} 
%	t\to \int_{-\infty}^{+\infty} \psi'(y+\mu-\nu(t)) V_t(t, y) dy, t\to \int_{-\infty}^{+\infty} [(\phi'-\ga \phi'''+\phi\phi')(y+\mu-\nu(t)) U_t(t,y) dy
%\end{equation}
%are uniformly bounded up to a potential blow-up time $t_*$.   {\it That is, \eqref{310} still needs to be established in a pending continuity in time argument, see below}. 
With the choice of $\nu(t), \mu(t)$ as made in \eqref{300},  we arrive at the energy estimate 
\begin{equation}
	\label{320} 
	\begin{cases}
		\f{1}{2}  \p_t\left( \int (1+ \phi) U^2+\f{1}{3} \int U^3+\|V\|^2+ \ga \|U'\|^2 \right) + \al \int  U (U')^2\\
		+  \be\|V'\|^2 +  \ga \al \|U''\|^2+ \al  \int (1+ \phi) (U')^2 +\f{1}{2}  \int  (N (-\phi') - \al \phi'')  U^2  \leq 0.
	\end{cases}
\end{equation} 
Here, we have the easy Sobolev bound for the cubic term (for some absolute constant $C$), 
$$
 \left|\int  U (U')^2\right|  \leq \|U\|_{L^\infty} \|U'\|_{L^2}^2\leq C \|U(t)\|_{H^1} \|U'\|_{L^2}^2\leq C  C_1(\phi) \eps_0\|U'\|_{L^2}^2,
$$
due to the {\it a priori} bound on $\sup_{0<t<t_*} \|U\|_{H^1}\leq C_1(\phi)\eps_0$ and \eqref{297}. 
Additionally, we have the bound 
$$
 \f{\al}{2}   \int (1+ \phi) (U')^2 +\al  \int  U (U')^2\geq  \left(\f{\al(1+\phi_+)}{2} -CC_1(\phi) \eps_0\right)  \int (U')^2 >0, 
$$
for small enough $\eps_0$, namely $CC_1(\phi) \eps_0<\f{\al(1+\phi_+)}{4} $. 

We now tackle the remaining quadratic in $U$ terms, which appears in \eqref{320}. 
%$$
% \f{\al}{4}   \int (1+ \phi) (U')^2 +\f{1}{2}  \int  (N (-\phi') - \al \phi'')  U^2\geq  \f{1}{2}  \int  (N (-\phi') - \al \phi'')  U^2. 
%$$
 Recalling that the kink $\phi$ is decreasing in $(-\infty, +\infty)$, and hence $-\phi'>0$, we may select $N=N(\phi)$, so that $N (-\phi') - \al \phi''=(-\phi')\left(N-\al\f{\phi''}{-\phi'}\right)>0$ and hence $$
 \int  (N (-\phi') - \al \phi'')  U^2>0.
 $$
  Indeed, the asymptotics \eqref{f:1} show that 
$$
\lim_{x\to \pm \infty} \left|\frac{\phi''(x)}{\phi'(x)}\right|= \f{m|\mu_{\pm}(\nu)|}{\si}>0, 
$$
whence it follows,  that $\sup_{x} \left|\frac{\phi''(x)}{\phi'(x)}\right|<\infty$, as $\phi'$ does not vanish. So, a choice of $N:=2\al \sup_{x} \left|\frac{\phi''(x)}{\phi'(x)}\right|$ ensures that $N (-\phi') - \al \phi''>0$.

It follows that the last quadratic form is non-negative, which means that 
\begin{equation}
	\label{330} 
	\f{1}{2} \p_t\left( \int (1+ \phi) U^2+\f{1}{3} \int U^3+\|V\|^2+ \ga \|U'\|^2 \right) +  \be\|V'\|^2 +\f{\al(1+\phi_+)}{2} \|U'\|^2+  \ga \al \|U''\|^2\leq 0.
\end{equation}
This means that up to  $t_*$, we have control of the various norms, per \eqref{330} as follows. Ignoring the positive terms $ \be\|V'\|^2 +\f{\al(1+\phi_+)}{2} \|U'\|^2+  \ga \al \|U''\|^2$ for the moment, we  integrate\eqref{330}, to obtain for all $0<t<t_*$, 
\begin{eqnarray*}
& & 	 \int  (1+\phi(y+\mu(t)-\nu(t)) ) U^2(t, y)+\f{1}{3} \int U^3(t,y)+\|V(t,\cdot)\|^2+ \ga \|U'(t,\cdot)\|^2 \leq \\
	 &\leq &  \int  (1+\phi(y))  U_0^2(t, y)+\f{1}{3} \int U_0^3(y)+\|V_0\|^2+ \ga \|U'_0\|^2. 
\end{eqnarray*}
Trivially, for an absolute constant $C$, 
$$
|\int_{-\infty}^{+\infty}  U^3(t,y) dy|\leq C \|U\|_{L^\infty} \|U\|^2\leq C \|U(t)\|_{H^1}^3\leq C_1(\phi) \eps_0 \|U(t)\|_{H^1}^2, 
$$
due to the {\it a priori} bound for $\|U(t)\|_{H^1}$. 
For $\eps_0: C_1(\phi)\eps_0<\f{1}{100} \min(\ga, 1+\phi_+)$, we can hide the contribution of $\int U^3$. Indeed, 
\begin{eqnarray*}
	& & \f{\ga}{2} \|U'(t,\cdot)\|^2+  \f{1}{2} \int  (1+\phi(y+\mu(t)-\nu(t)) ) U^2(t, y) + \f{1}{3} \int U^3(t,y)\geq \\
	&\geq & \left(\f{\ga}{2}+\f{1+\phi_+}{2}\right) \|U\|_{H^1}^2 - C_1(\phi) \eps_0 \|U(t)\|_{H^1}^2\geq 0, 
\end{eqnarray*}
by the choice of $\eps_0$. So, we arrive at, 
\begin{equation}
	\label{414} 
	\f{\min((1+\phi_+), \ga)}{2} \|U(t, \cdot)\|_{H^1}^2 + \|V(t,\cdot)\|^2 \leq 	\max((1+\phi_-),\ga)  \|U_0\|_{H^1}^2+\|V_0\|^2. 
\end{equation}
It follows that 
\begin{equation}
	\label{418} 
	\sup_{0<t<t_*} (\|U(t, \cdot)\|_{H^1}+\|V(t,\cdot)\|)< \f{C_1(\phi)}{2}( \|U_0\|_{H^1}+\|V_0\|),
\end{equation}
where 
\begin{equation}
	\label{419} 
C_1(\phi)=100\sqrt{ \f{1+\max((1+\phi_-),\ga)}{\min(\f{\min((1+\phi_+), \ga)}{2},1)}}. 
\end{equation}
  Going back to \eqref{330}, we see that we get additional information. Namely, adding back on the terms $\be\|V'\|^2 +\f{\al(1+\phi_+)}{2} \|U'\|^2+  \ga \al \|U''\|^2$ and integrating (having in mind that the rest of the terms are positive and may be ignored), we get an $L^2_t$ estimate 
\begin{equation}
	\label{360} 
	\int_0^{\infty} [\|U'(t, \cdot)\|^2+\|V'(t, \cdot)\|^2 + \|U''(t,\cdot)\|^2] dt\leq C  [\|U_0\|_{H^1}^2 +
	\|V_0\|_{L^2}^2].
\end{equation}
This completes the proof of Proposition \ref{prop:65}. 
\end{proof}
\subsection{Asymptotic stability} 
We start this section with the observation that the statement of Proposition \ref{prop:65}, specificaly \eqref{w:10} is effectively a statement about orbital stability of the kinks $\phi$. We now extend this further in two ways - first we show that a stronger, asymptotic stability result holds\footnote{That is, appropriate norms of the residuals $U,V$ converge to zero as $t\to +\infty$} and also, we extend the range of the normed spaces in which this is true. It turns out that in this part of our argument, the {\it a priori} estimate \eqref{w:20} is more relevant. 
We claim that 
\begin{equation}
	\label{w:30} 
	\lim_{t\to +\infty} \|U'(t, \cdot)\|=0. 
\end{equation}
To this end,  note that the function $t\to \|U'(t, \cdot)\|^2$ is an integrable function, according to \eqref{w:20}. So, in order to establish \eqref{w:30}, it will suffice to see that the function $t\to \|U'(t, \cdot)\|^2$ is uniformly continuous on $(0, \infty)$. Since for $0<h<1$, 
	\begin{eqnarray*}
		& & \left| \|U'(t+h)\|^2-\|U'(t)\|^2\right|=2 |\int_{t}^{t+h} \p_\tau \dpr{U'(\tau)}{U'(\tau)} d\tau|\leq \\
		&\leq &  2 \left(\int_t^{t+1} \|\p_\tau U(\tau)\|^2 d\tau\right)^{1/2} 
		\left(\int_t^{t+1} \|U''(\tau)\|^2 d\tau\right)^{1/2}
\end{eqnarray*} and we already know from \eqref{w:20} that $\lim_{t\to \infty} \int_t^{t+1} \|U''(\tau)\|^2 d\tau=0$, it will suffice to show a uniform in $t$ bound on $\int_t^{t+1} \|\p_\tau U(\tau)\|^2 d\tau$. To this end we use the equation for $U_t$, 
%\begin{equation}
%	\label{w:40} 
%	\lim_{t\to\infty} \int_t^{t+1} |\p_s \|U'(s, \cdot)\|^2|  ds<\infty, 
%\end{equation}
%which is the focus henceforth. 
%We have by integration by parts and Cauchy-Schwartz, 
%\begin{eqnarray*}
%\f{1}{2} 	\int_t^{t+1} |\p_s \|U'(s, \cdot)\|^2|  ds= \int_t^{t+1} |\dpr{\p_s U(s)}{U''(s)}|  ds\leq  \left(\int_t^{t+1} \|U_s(s, \cdot)\|^2 ds\right)^{\f{1}{2}} \left(\int_t^{t+1} \|U''(s, \cdot)\|^2 ds\right)^{\f{1}{2}}.
%\end{eqnarray*}
%The estimate for $\int_t^{t+1} \|U''(s, \cdot)\|^2 ds<\int_0^\infty \|U''(s, \cdot)\|^2 ds<\infty$ is already available in \eqref{w:20}. For the other term, we make use of the equation for $U_s$ in the PDE, \eqref{105}. We have 
\begin{eqnarray*}
	\int_t^{t+1} \|U_s(s, \cdot)\|^2 ds &\leq &  C (N^2  \sup_{t<s<t+1} 
	\|U'(s, \cdot)\|_{L^2}+ \sup_{t<s<t+1} |\mu'(s)|^2 \|\phi'\|_{L^2}^2)+ \\
	&+& \al \int_0^\infty \|U''(s, \cdot)\|^2 ds+ \int_0^\infty \|V'(s, \cdot)\|^2 ds.
\end{eqnarray*}
 All of these terms are controlled in \eqref{w:10}, \eqref{3001} and subsequently by \eqref{w:20}. Thus, the mapping $t\to \|U'(t, \cdot)\|^2$ is uniformly continuous, and combined with $\int_0^\infty \|U'(t, \cdot)\|^2 dt<\infty$, implies that \\  $\lim_{t\to \infty} \|U'(t, \cdot)\|=0$. 

We now invoke the Gagliardo-Nirenberg estimate \eqref{gn:15}, to conclude that since 
$$
\|U(t)\|_{L^p}\leq C \|U'(t)\|^{\f{1}{2}-\f{1}{p}} \|U(t)\|^{\f{1}{2}+\f{1}{p}} 
$$
we have that for each $p: 2<p<\infty$
$$
\lim_{t\to \infty} \|U(t)\|_{L^p}=0.
$$

{\bf Funding declaration:} This work was supported by NSF under award \#  2204788.

\appendix

\section{Proof of Proposition \ref{prop:10}}
Starting with \eqref{141}, 
we apply  the transformation $\phi=\Phi+\si^2-1$.  This  leads to the form 
\begin{equation}
	\label{19} 
	B \Phi''+\si \Phi' - \f{1}{2}(\Phi^2-m^2)=0, 
\end{equation}
where $m=\f{\phi_- - \phi_+}{2}>0$. 
A  further reduction in \eqref{19}, namely, $\Phi=m q\left(\f{m}{\si}x\right)$,   transforms \eqref{19} into  
	\begin{equation}
		\label{291} 
		\f{m B}{\si^2} q''+q'-\f{1}{2}\left(q^2-1\right)=0.
	\end{equation}
	Note that \eqref{291} now asks for a function $q: \lim_{x\to \pm \infty} q(x) = \mp 1$. We now refer to the standard results on the topic,  \cite{BS85} and \cite{BRS}. Namely, each equation of the form \eqref{291} has an unique solution connecting $q(-\infty)=1, q(+\infty)=-1$. Moreover, such solutions are monotone decreasing if and only if $|\nu|=|\f{m B}{\si^2}|\leq \f{1}{4}$. 	This is exactly \eqref{292}. 
	
	Regarding the proof of the asymptotics \eqref{f:1} - these are standard results for heteroclinic solutions to second order equations on $\rone$. First, note that \eqref{f:1} follows from the transformation formula $\phi=mq\left(\f{m}{\si} x\right)+\si^2-1$ adopted above, provided one can show the asymptotics for $q$ as follows \begin{eqnarray}
			\label{f:2} 
			& & \begin{cases}
				q(x)=\mp 1 - c_\pm e^{\mu_{\pm}(\nu) x} +o(e^{\mu_{\pm}(\nu) x} ), \ \ x\sim \pm \infty \\
				q'(x)=  - c_\pm \mu_{\pm}(\nu) e^{\mu_{\pm}(\nu) x} +o(e^{\mu_{\pm}(\nu) x} ),\ \ x\sim \pm \infty \\
				q''(x)=- c_\pm \mu^2_{\pm}(\nu) e^{\mu_{\pm}(\nu) x}+o(e^{\mu_{\pm}(\nu) x} ), \ \ x\sim \pm \infty 
			\end{cases}.
	\end{eqnarray}
To this end, we study \begin{equation}\label{eq:ode}
	\nu q'' + q' - \frac{q^2-1}{2} = 0, \qquad q(-\infty)=1,\quad q(+\infty)=-1,
\end{equation}
Setting $v = u'$, equation~\eqref{eq:ode}
is equivalent to the planar autonomous system
\begin{equation}\label{eq:system}
	u' = v, \qquad
	\nu v' = \frac{u^2-1}{2} - v.
\end{equation}
Its equilibria are $(u,v)=(\pm 1, 0)$, corresponding to the two boundary states. 
At the left state, $u_0 = 1$, the characteristic equation becomes $\nu\lambda^2 + \lambda - 1 = 0$. The two roots
$$
\frac{-1 \pm \sqrt{1+4\nu}}{2\nu}
$$
are real  and they have opposite signs, 
for $\nu > 0$,  so $(1,0)$ is a saddle. 
The unique eigenvalue relevant to the kink,  is the positive root
$$
	  \mu_-(\nu)=\frac{-1 + \sqrt{1+4\nu}}{2\nu}>0.
$$
At the right state $u_0 = -1$ 
the characteristic equation  becomes $\nu\lambda^2 + \lambda + 1 = 0$, with 
the roots
$$
 \frac{-1 \pm \sqrt{1-4\nu}}{2\nu}
$$
which are real if and only if $\nu \le \tfrac{1}{4}$.
For  $\nu \in (0, 1/4)$,  they are both negative, so $(-1,0)$ is a stable node. 
The dominant eigenvalue,  governing the
 rate of approach,  is
$$
\mu_+(\nu) = \frac{-1 + \sqrt{1-4\nu}}{2\nu} < 0.
$$
These considerations imply the asymptotics \eqref{f:2}. Moreover, the requirement that the charactristic roots are all real is precisely what characterizes the monotonicity, so we see that $|\nu|<1/4$.
%\section{Proof of Lemma \ref{le:14}}
%Let $L_N:=-\p_x^2+N W_1 + W_2$ be the Schr\"odinger operator in question. Since $W_1, W_2$ are decaying potentials, it follows from the Weyl's theorem that 
%$$
%\si(L_N)=\si_{p.p.}(L_N)\cup \si_{a.c.}(L_N)=\si_{p.p.}(L_N)\cup [0, +\infty).
%$$
%Thus, we need to rule out negative eigenvalues for large enough values of $N$. To this end, assume for a contradiction that there are in fact a sequence of such eigenvalues $-\si_k<0$,  $N_k\to +\infty$, and with corresponding eigenfunctions $f_k\in H^2: \|f_k\|=1$. So, 
%$$
%-f_k''+N_k W_1 f_k+W_2 f_k=-\si_k f_k
%$$
%Taking dot product with $f_k$ yields 
%\begin{equation}
%	\label{c:10} 
%	\|f_k'\|^2+N_k \int W_1 f_k^2 + \si_k =-\int W_2 f_k^2\leq \|W_2\|_{L^\infty}
%\end{equation}
%As all the terms on the left-hand side are non-negative, it follows that $\{\si_k\}_k$ is a bounded sequence, while 
%\begin{equation}
%	\label{c:20} 
%	\sup_k [\|f_k\|_{H^1}+ \sqrt{N_k} \|\sqrt{W_1} f_k\|]<\infty.
%\end{equation}
%Recalling that $W_1$ is decaying at $\infty$, by Riesz-Kolmogorov criteria, the sequence $\{f_k\}$ is strongly compact in $L^2$. After taking appropriate subsequences (and assuming without loss of generality that $\lim_k \|f_k-f\|=0$, $f\in H^1: \|f\|=1$), we arrive at a contradiction with \eqref{c:20}, as $\lim_k \|\sqrt{W_1} f_k\|]=\|\sqrt{W_1} f\|]>0$. 
%
%
%
%
%
%
%
%
%
%
%
% 
%
%
%%\newpage 
% 


\begin{thebibliography}{99}
	
	
	
\bibitem{BRS} J. L. Bona, S. V. Rajopadhye, M. E. Schonbek, \emph{ Models for propagation of bores. I.Two-dimensional theory.}, {\em  Differential Integral Equations}, {\bf  7, (3-4)}, (1994),  p. 699--734. 

\bibitem{BS88}  J.L. Bona, R.L. Sachs, \emph{Global existence of smooth solutions and stability of solitary waves for a generalized Boussinesq equation}, {\em Comm. Math. Phys.} {\bf  118}, 
 (1988), p. 15--29.

\bibitem{BS85} J. L. Bona,  M. E. Schonbek, \emph{ Travelling-wave solutions to the Korteweg-de Vries-Burgers equation.}, {\em  Proc. Roy. Soc. Edinburgh Sect. A} {\bf 101},  (1985), no. 3-4,  p. 207--226. 
	
\bibitem{CD} W. 	Chen,  T. Dao, \emph{
	The Cauchy problem for the nonlinear viscous Boussinesq equation in the $l^q$ framework}, 
	{\em J. Differential Equations}, {\bf  320},  (2022), p. 558--597.
	
	\bibitem{HZL}   Y. Hu, W. Zhang, X. Ling, \emph{Qualitative analysis and bounded traveling wave solutions for Boussinesq equation with dissipative term}, {\em Nonlinear Dyn.}, {\bf 105}, (2021),  p. 2595--2609. 
	
	
	
\bibitem{LW} 	G. Liu, W. Wang, \emph{Decay estimates for a dissipative-dispersive linear semigroup and application to the viscous Boussinesq equation}, {\em J. Funct. Anal.} {\bf  278}, (7) (2020) 108413.

\bibitem{Liu1} Y. Liu, \emph{Instablity of solitary waves for generalized Boussinesq equations}, {\em J. Dynam. Differential Equations}, {\bf  5},  (1993), p. 537--558.
\bibitem{Liu2} Y. Liu, \emph{Instability and blow up of solutions to a generalized Boussinesq equation}, {\em SIAM J. Math. Anal.} {\bf  26},  (1995), p. 1527--1546.

\bibitem{MPP} C. Munoz, F. Poblete,  J. C. Pozo, \emph{ Scattering in the energy space for Boussinesq equations}, {\em Comm. Math. Phys.}, {\bf  361}, (2018), p. 127--141. 

\bibitem{OC} T. Ozawa,  Y. Cho, \emph{On small amplitude solutions to thegeneralized Boussinesq equations},  {\em Discrete Cont. Dyn. Sys. - A}, {\bf 17}, (2007), p. 691--711. 


\bibitem{TM} M. Tsutsumi,  T. Matahshi,  \emph{On the Cauchy problem for the Boussinesq-type equation}, {\em Math. Jpn. Soc.}, {\bf 36}, (1991), p. 371--379. 

\bibitem{Wh} G.B. Whitham, Linear and Nonlinear Wave. Wiley, NewYork (1974)



 
 
	
 

\end{thebibliography}
\end{document}